\newcommand{\mathsym}[1]{{}}
\newtheorem{thm}{Theorem}[section]
\newtheorem{lemma}[thm]{Lemma}
\newtheorem{proposition}[thm]{Proposition}
\theoremstyle{definition}
\newtheorem{exm}[thm]{Example}
\newtheorem{rem}[thm]{Remark}
\def\beq{\begin{equation} }
\def\eeq{\end{equation} }
\def\N{\mathbb{N}}
\def\R{\mathbb{R}}
\def\P{\mathbf{P}}
\def\u{\mathbf{u}}
\def\x{\mathbf{x}}
\def\z{\mathbf{z}}
\def\dis{\displaystyle}
\def\R{\mathbb{R}}
\def\dis{\displaystyle}
\begin{document}
\title{Lower bounds on the global minimum of a polynomial}
\author{M. Ghasemi, J.B. Lasserre, M. Marshall}

\begin{abstract}
We extend the
method of Ghasemi and Marshall [SIAM. J. Opt. 22(2) (2012), pp 460-473], to obtain a lower bound $f_{{\rm gp},M}$
for a multivariate polynomial $f(\x) \in \R[\x]$ of degree $ \le 2d$ in $n$ variables $\x = (x_1,\dots,x_n)$ on the closed ball $\{ \x \in \R^n : \sum x_i^{2d} \le M\}$, computable by geometric programming, for any real $M$. We compare this bound with the (global) lower bound $f_{{\rm gp}}$
obtained by Ghasemi and Marshall, and also with the hierarchy of lower bounds, computable by semidefinite programming, obtained by Lasserre [SIAM J. Opt. 11(3) (2001) pp 796-816]. Our computations show that the bound $f_{{\rm gp},M}$ improves on the bound $f_{{\rm gp}}$ and that the computation of $f_{{\rm gp},M}$, like that of $f_{{\rm gp}}$, can be carried out quickly and easily for polynomials having of large number of variables and/or large degree, assuming a reasonable sparsity of coefficients, cases where the corresponding computation using semidefinite programming breaks down.
\end{abstract}

\keywords{Positive polynomials, sums of squares, optimization, geometric programming}
\subjclass[2010]{14P99, 65K10, 90C25}

\maketitle

\section{Introduction}

Computing a lower bound on the global minimum on $\R^n$ of a multivariate polynomial is
a standard problem of optimization with many potential applications. In the last decade,
results in polynomial optimization combined with semidefinite programming (for sums of squares representation),
have permitted to make some progress. For instance, one may compute a lower bound of  polynomial $f\in\R[\x]$ on $\R^n$:
\begin{itemize}
\item by solving the problem $f_{{\rm sos}}:=\sup\{\lambda: f-\lambda \mbox{ is sos}\}$, which is a single semidefinite program
\item by applying the hierarchy of semidefinite relaxations
to the polynomial optimization problem $\inf \{f(\x): \nabla f(\x)=0\}$ (assuming that the infimum is attained)
\item by applying the hierarchy of semidefinite relaxations
to the polynomial optimization problem $\inf \{f(\x): \Vert\x\Vert^{2}\leq M\}$, for sufficiently large $M$
(assuming that a global minimum satisfies that bound constraint).
\end{itemize}
All those approaches are very powerful and provide good bounds and sometimes the exact
value. However, so far, and in view of the present status of semidefinite programming, those methods are  limited to
small to medium size problems, except if some structured sparsity is present (in which case specialized semidefinite relaxations can be implemented; see e.g. \cite{kim}).

This limitation of semidefinite programming to implement sums of squares (SOS) representations,
was the motivation for providing other SOS certificates and yielded the sufficient conditions of
\cite{lasserre} and subsequently of \cite{fidalgo,gha-mar1}.
And in a recent work Ghasemi and Marshall \cite{gha-mar2} have shown how to compute a lower bound on
the global optimum of a multivariate polynomial on $\R^n$, by solving a certain geometric program.
This formulation as a geometric program is based on the sufficient condition for a polynomial to be a
sum of squares given in \cite{gha-mar1}, which generalizes the sufficient conditions of
\cite{fidalgo,lasserre}. Geometric programming (GP) is a convex optimization problem
that can be solved efficiently for relatively large scale problems. In Boyd et al.
\cite{boyd} it is claimed that GP problems with up to
$10^3$ variables and $10^4$ constraints can be solved via standard interior point methods.
For sparse GP problems, i.e., where each constraint depends only on a small number of variables,
the size limit can grow up to $10^4$ variables and $10^6$ constraints!
So the interest of the geometric programming formulation is
that one may now handle polynomials with a large number of variables and high degree, especially when the support of $f$
(i.e., the set  of non zero coefficients) is small.

{\bf Contribution.} Our contribution is to extend the geometric programming formulation of
Ghasemi and Marshall \cite{gha-mar2} to provide a lower bound on
$f_{*,M}:=\min \{f(\x):\sum_i x_i^{2d}\leq M\}$. The latter problem has its own interest and also serves as an auxiliary problem to provide a lower bound on $f_* :=\min \{f(\x):\x\in\R^n\}$ when a global minimizer is ``guessed" to belong
to the ball $\{\x:\sum_ix_i^{2d}\leq M\}$. Again, and as for \cite{gha-mar2}, the main interest of this approach is
to be able to handle polynomials with large number of variables and/or large degree for which so far, there is no such algorithm.
Notice that even for a small number of variables, the SOS approaches cannot handle polynomials with large degree.

\section{Main result}

\subsection*{Notation and definitions}

Let $\R[\x]$ be the ring of polynomials in the variables $\x=(x_1,\ldots,x_n)$, and for $d\in\N$,
let $\R[\x]_d\subset\R[\x]$ be
the vector space of polynomials of degree at most $d$. Let
$\N^n_d:=\{\alpha\in\N^n:\vert\alpha\vert\leq d\}$ where $\vert\alpha\vert=\sum_i\alpha_i$ for every $\alpha\in\N^n$.

Assume now that $d\ge 1$. Let $\epsilon_i:=(\delta_{i1},\cdots,\delta_{in})\in\N^n$, with $\delta_{ij}=1$ if $i=j$ and $0$ otherwise, and
given $f= \sum f_{\alpha} \x^{\alpha}\in\R[\x]_{2d}$,  let:
\begin{eqnarray*}
\Omega(f)&:=&\{\alpha\in\N^n_{2d}\::\:f_\alpha\neq0\}\setminus\{0,2d\epsilon_1,\cdots,2d\epsilon_n\}\\
\Delta(f)&:=&\{\alpha\in\Omega(f)\::\:f_\alpha\,\x^\alpha\mbox{ is not a square in $\R[\x]$}\}\\
\Delta(f)^{< 2d}&:=&\{\alpha\in\Delta(f)\::\:\vert\alpha\vert <2d\}.
\end{eqnarray*}
Denote the coefficient $f_{2d\epsilon_i}$ by $f_{2d,i}$ for $i=1,\dots,n$.

We first recall the following result of Ghasemi and Marshall \cite{gha-mar2}.

\begin{proposition}(\cite[Corollary 3.6]{gha-mar2})
\label{prop1}
Let $f\in\R[\x]_{2d}$ and let $\rho$ be the optimal value of the program:
\begin{equation}
\label{lw}
\left\{\begin{array}{rl}
\rho=\dis\min_{\z_\alpha}&\dis\sum_{\alpha\in\Delta(f)^{<2d}}(2d-\vert\alpha\vert)\left[
\left(\frac{f_\alpha}{2d}\right)^{2d}\,\left(\frac{\alpha}{\z_\alpha}\right)^\alpha
\right]^{1/(2d-\vert\alpha\vert)}\\
&\\
\mbox{s.t.}&\dis\sum_{\alpha\in\Delta(f)}z_{\alpha,i \ \le \ f_{2d,i}},\quad i=1,\ldots,n\\
&\\
&\left(\frac{2d}{f_\alpha}\right)^{2d}\,\left(\frac{\z_\alpha}{\alpha}\right)^\alpha \,=\,1;\quad\alpha\in\Delta(f),\:\vert\alpha\vert=2d.
\end{array}\right.\end{equation}
where for every $\alpha\in\Delta(f)$, the unknowns $\z_\alpha = (z_{\alpha,i})\in\R^n_+$ satisfy $z_{\alpha,i}=0$ if and only if $\alpha_i=0$. Here, $\left(\frac{\alpha}{\z_\alpha}\right)^\alpha := \prod_{i=1}^n \frac{\alpha_i^{\alpha_i}}{(z_{d,i})^{\alpha_i}}$ and $\left(\frac{\z_\alpha}{\alpha}\right)^\alpha := \prod_{i=1}^n \frac{(z_{d,i})^{\alpha_i}}{\alpha_i^{\alpha_i}}$ with the convention $0^0 = 1$.
Then $f(\x)\geq f(0)-\rho$ for all $\x\in\R^n$.\end{proposition}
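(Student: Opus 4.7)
The plan is to apply the weighted arithmetic-geometric mean (AM-GM) inequality monomial-by-monomial to the terms $f_\alpha\x^\alpha$ with $\alpha\in\Delta(f)$, then sum the resulting bounds and absorb the arising $x_i^{2d}$ terms into the pure-power coefficients $f_{2d,i}$ via the linear constraint.

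First, fix $\alpha\in\Delta(f)$ with $|\alpha|<2d$. The weights $w_i=\alpha_i/(2d)$ for $i=1,\dots,n$ together with $w_0=(2d-|\alpha|)/(2d)$ sum to $1$. Applying weighted AM-GM to the quantities $a_i=(2d\,z_{\alpha,i}/\alpha_i)\,x_i^{2d}$ (for those $i$ with $\alpha_i>0$) and to an auxiliary constant $a_0=R_\alpha>0$ gives
\[
\Biggl(\prod_{i:\alpha_i>0}\Bigl(\tfrac{2d\,z_{\alpha,i}}{\alpha_i}\Bigr)^{\alpha_i/(2d)}\Biggr)|\x^\alpha|\,R_\alpha^{(2d-|\alpha|)/(2d)}\;\le\;\sum_{i=1}^n z_{\alpha,i}\,x_i^{2d}\;+\;\tfrac{2d-|\alpha|}{2d}\,R_\alpha.
\]
I would then pick $R_\alpha$ so that the left-hand side is exactly $|f_\alpha\x^\alpha|$; raising to the $2d$-th power forces $R_\alpha=\bigl(f_\alpha^{2d}(2d)^{-|\alpha|}(\alpha/\z_\alpha)^\alpha\bigr)^{1/(2d-|\alpha|)}$, after which a direct simplification shows that the arithmetic remainder $\tfrac{2d-|\alpha|}{2d}R_\alpha$ equals the $\alpha$-term of the objective in \eqref{lw}. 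For $\alpha\in\Delta(f)$ with $|\alpha|=2d$ the auxiliary $a_0$ disappears, the same AM-GM collapses to $|f_\alpha\x^\alpha|\le\sum_i z_{\alpha,i}x_i^{2d}$, and the tightness requirement that the geometric mean equal $|f_\alpha\x^\alpha|$ is precisely the equality constraint $(2d/f_\alpha)^{2d}(\z_\alpha/\alpha)^\alpha=1$ of \eqref{lw}.

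Summing these estimates over $\alpha\in\Delta(f)$ and invoking $\sum_{\alpha\in\Delta(f)}z_{\alpha,i}\le f_{2d,i}$ gives
\[
\sum_{\alpha\in\Delta(f)}|f_\alpha\x^\alpha|\;\le\;\sum_{i=1}^n f_{2d,i}\,x_i^{2d}+\rho.
\]
Writing $f(\x)=f(0)+\sum_i f_{2d,i}\,x_i^{2d}+\sum_{\alpha\in\Omega(f)}f_\alpha\x^\alpha$, noting that every $\alpha\in\Omega(f)\setminus\Delta(f)$ contributes a nonnegative monomial (by the very definition of $\Delta(f)$), and bounding the $\Delta(f)$ contribution below by the negative of its absolute value, I would conclude $f(\x)\ge f(0)-\rho$.

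The only real obstacle is the bookkeeping in the first step: one must pick the correct AM-GM weights, solve for the auxiliary constant $R_\alpha$ so that the geometric mean reproduces exactly $|f_\alpha\x^\alpha|$, and then verify that the resulting remainder matches the somewhat unusual exponent $1/(2d-|\alpha|)$ appearing in the objective of \eqref{lw}. Once that identification is established, the assembly and the final non-negativity arguments are routine.
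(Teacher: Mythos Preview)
Your argument is correct. The weighted AM--GM inequality with weights $\alpha_i/(2d)$ and slack weight $(2d-|\alpha|)/(2d)$ is exactly the right tool, your choice of $R_\alpha$ does reproduce the objective term after simplification (the identity $(2d-|\alpha|)/(2d)\cdot R_\alpha=(2d-|\alpha|)\bigl[(f_\alpha/2d)^{2d}(\alpha/\z_\alpha)^\alpha\bigr]^{1/(2d-|\alpha|)}$ checks out once one pulls a factor of $(2d)^{2d-|\alpha|}$ through the radical), and the assembly via the linear constraint and the nonnegativity of the $\Omega(f)\setminus\Delta(f)$ terms is straightforward.

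Note, however, that the present paper does \emph{not} prove Proposition~\ref{prop1}: it is simply quoted as \cite[Corollary~3.6]{gha-mar2}, so there is no ``paper's own proof'' to compare against here. What you have written is essentially a self-contained reconstruction of the argument underlying that corollary (which in \cite{gha-mar2} is itself derived from the sufficient SOS criteria of \cite{gha-mar1,fidalgo,lasserre}, all of which ultimately rest on the same AM--GM mechanism you invoke). One minor point of phrasing: your AM--GM estimate yields $f(\x)\ge f(0)-\text{obj}(\z)$ for \emph{every} feasible $\z$, and then $f(\x)\ge f(0)-\rho$ follows by taking the supremum of the right-hand side (equivalently, passing to a minimizing sequence for the objective); you might make that final step explicit.
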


The most interesting case is when $f_{2d,i} >0$, $i =1,\dots,n$, in which case the program (\ref{lw}) is a geometric program. Somewhat more generally, if $\forall$ $i=1,\dots,n$ either ($f_{2d,i}>0$) or ($f_{2d,i}=0$ and  $\alpha_i=0$ $\forall$ $\alpha \in \Delta(f)$), then the program (\ref{lw}) is a geometric program. In the remaining cases the program
(\ref{lw}) is not a geometric program, the feasibility set of (\ref{lw}) is empty, and the output $\rho$ is $\infty$.

\subsection*{Problem statement}

Let  $f\in\R[\x]$ and, for $M>0$, consider the problem:
\begin{equation}
\label{pb-M}
\P_M:\quad f_{*,M}:=\min \:\{f(\x)\::\:\sum_{i=1}^n x_i^{2d}\leq\,M\}.\end{equation}
Problem $\P_M$ has its own interest but is also an auxiliary problem for the unconstrained problem
$\P_\infty:\: f_*=\min\{f(\x):\x\in\R^n\}$, when a global minimizer is guessed to belong to the ball
$B_M:= \{\x:\sum_ix_i^{2d}\leq M\}$. Also, notice that the sequence $(f_{*,M})$, $M\in\N$,  provides a monotone nonincreasing sequence of upper bounds on $f_*$ that converges to $f_*$ in finitely many steps whenever $\P_{\infty}$ has an optimal solution $\x^*\in\R^n$.

\subsection*{Main result}

With $M>0$ fixed, to compute a lower bound for $f_{*,M}$, let $\lambda\geq0$ and
consider the polynomial $f_\lambda\in\R[\x]$
\begin{equation}
\label{def1}
\x\mapsto f_\lambda(\x)\,=\,f(\x)-\lambda(M-\sum_{i=1}^nx_i^{2d}),\qquad\lambda\geq0.\end{equation}
%Then :
\begin{lemma}
Let $f\in\R[\x]$, $\deg \, f \le 2d$ and let $f_\lambda\in\R[\x]$ be as in (\ref{def1}). Then:
\begin{equation}
\label{lag1}
f_{*,M}\,\geq\,\dis\max_{\lambda\geq0}\:\dis\underbrace{\min_{\x\in\R^n} f_\lambda(\x)}_{G(\lambda)}\,=\,\dis\max_{\lambda\geq0}\:G(\lambda).
\end{equation}
Moreover, if either $f_*= f_{*,M}$ or $f$ is convex then equality holds.
\end{lemma}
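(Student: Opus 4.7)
The plan is to recognize \eqref{lag1} as the standard Lagrangian weak/strong duality relation for $\P_M$. The single inequality constraint is $g(\x) := \sum_i x_i^{2d} - M \le 0$, and $f_\lambda(\x) = f(\x) + \lambda\, g(\x)$ is precisely the Lagrangian with multiplier $\lambda \ge 0$; thus $G(\lambda) = \inf_{\x \in \R^n} f_\lambda(\x)$ is the Lagrangian dual function and $\max_{\lambda \ge 0} G(\lambda)$ is the dual optimum. The inequality in \eqref{lag1} is weak duality, and the two ``equality'' hypotheses are the two cheapest sufficient conditions for strong duality.

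For the inequality I would argue directly, without invoking any general duality theorem. If $\x$ is feasible for $\P_M$, i.e.\ $g(\x)\le 0$, then for every $\lambda\ge 0$ we have $\lambda(M - \sum_i x_i^{2d}) \ge 0$, hence
\begin{equation*}
G(\lambda) \;=\; \min_{\y \in \R^n} f_\lambda(\y) \;\le\; f_\lambda(\x) \;=\; f(\x) - \lambda\bigl(M - \textstyle\sum_i x_i^{2d}\bigr) \;\le\; f(\x).
\end{equation*}
Taking the infimum over feasible $\x$ gives $G(\lambda) \le f_{*,M}$ for every $\lambda \ge 0$, and taking the supremum over $\lambda \ge 0$ yields \eqref{lag1}.

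For the equality assertions I would split into the two cases. If $f_* = f_{*,M}$, simply choose $\lambda = 0$: then $G(0) = \min_{\x} f(\x) = f_* = f_{*,M}$, so the reverse inequality holds automatically. If $f$ is convex, then so is $g$, since each $x_i^{2d}$ is convex; moreover $g(0) = -M < 0$ (this is where $M>0$ enters), so Slater's condition is satisfied. Strong duality for a convex program with a single affine-in-$\lambda$ Lagrangian under Slater (standard Rockafellar-type result) then forces equality in \eqref{lag1}.

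I do not expect any genuine obstacle here: both assertions are routine convex analysis. The only points to watch are that $f_\lambda$ is coercive for every $\lambda > 0$ (its leading term $\lambda\sum_i x_i^{2d}$ dominates $f$ at infinity), so $G(\lambda)$ is finite and the inner infimum is in fact attained; and that one must explicitly invoke $M > 0$ to obtain strict feasibility at $\x = 0$ in the convex case.
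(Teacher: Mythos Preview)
Your proof is correct and is precisely the standard Lagrangian-duality argument the paper has in mind; the paper itself omits the proof, calling it standard. The paper does add a post-lemma remark handling the equality cases via the least $\lambda_1\ge 0$ for which $f_{\lambda}$ attains its global minimum on $B_M$, whereas you take $\lambda=0$ in the first case and invoke Slater's condition in the convex case; both routes are routine and equivalent in strength.

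One correction to your closing comment: $f_\lambda$ need \emph{not} be coercive for every $\lambda>0$, because $f$ may contain degree-$2d$ cross terms not dominated by $\lambda\sum_i x_i^{2d}$. For instance, with $n=2$, $2d=4$, $f=-2x_1^2x_2^2$, one has $f_\lambda(t,t)=2(\lambda-1)t^4-\lambda M\to -\infty$ for $0<\lambda<1$. This does not affect your argument, since both the weak-duality inequality and Slater's theorem work with infima and do not require the inner minimum to be attained.
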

The proof is standard and will be omitted.
Actually, one can show that
$$\dis\max_{\lambda\geq0}\:\dis\min_{\x\in\R^n} f_\lambda(\x)\: = \:\dis\min_{\x\in\R^n} f_{\lambda_1}(\x) \: = (f_{\lambda_1})_{*,M}$$
where
$\lambda_1$ is the least $\lambda \ge 0$ such that $f_{\lambda}$ achieves its global minimum on the ball $B_M$.
Obviously, $f_{*,M} \ge (f_{\lambda_1})_{*,M}$. If $f_*= f_{*,M}$ then $\gamma_1 = 0$ and $f_{*,M} = (f_{\lambda_1})_{*,M}$. If $f$ is convex then $f_{\gamma}$ is convex for each $\gamma \ge 0$. If $f$ is convex and  $\gamma_1 >0$ then the minimum of $f_{\gamma}$ on $B_M$ is achieved on the boundary of $B_M$ for $0\le \gamma \le \gamma_1$, so $f_{*,M} = (f_{\lambda_1})_{*,M}$ holds in this case too.

Note that equality in (\ref{lag1}) fails in general.

\begin{exm} Let $n=1$, $2d=4$, $f(x) = 2x^2(x-2)^2+(1-x^4) = x^4-8x^3+8x^2+1$, $M=1$. Then $f_{*,M} = 1$ and $\lambda_1 = 1$ so  $\dis\max_{\lambda\geq0}\:\dis\min_{x\in\R} f_\lambda(x)= \dis\min_{x\in\R} f_{\lambda_1}(x)\:= 0$.
\end{exm}

Observe that for every $\lambda\geq0$,
\begin{equation}
\label{Glambda}
G(\lambda)\,=\,\min_{\x\in\R^n} f_\lambda(\x),\end{equation}
and so if for every $\lambda\geq0$,
$\overline{G}(\lambda)$ is a lower bound on $G(\lambda)$, then
\begin{equation}
\label{lag2}
f_{*,M}\,\geq\,
\dis\max_{\lambda\geq0}\:G(\lambda)\,\geq\,\dis\max_{\lambda\geq0}\:\overline{G}(\lambda).\end{equation}

After relabeling if necessary, we may and will assume that
\[f_{2d,1}\,\geq\,f_{2d,2}\,\geq\,\cdots \geq\,f_{2d,n}.\]
The main result of our paper is as follows:
\begin{thm}
\label{thmain}
Let $f\in\R[\x]$, ${\rm deg}\,f \le 2d$. Then
\[f_{*,M}\,\geq\, f(0)+M f_{2d,1}-\rho_M,\]
with $\rho_M$ being the optimal value of the geometric program:
\begin{equation}
\label{newgp}
\left\{\begin{array}{cl}
\rho_M=\dis\min_{\z_\alpha,\u}&Mu_1+\dis\sum_{\alpha\in\Delta(f)^{<2d}}(2d-\vert\alpha\vert)\left[
\left(\frac{f_\alpha}{2d}\right)^{2d}\,\left(\frac{\alpha}{\z_\alpha}\right)^\alpha
\right]^{1/(2d-\vert\alpha\vert)}\\
&\\
\mbox{s.t.}&\dis\sum_{\alpha\in\Delta(f)}\frac{z_{\alpha,i}}{u_i}\leq1,\quad i=1,\ldots,n\\
&\\
&\left(\frac{2d}{f_\alpha}\right)^{2d}\,\left(\frac{\z_\alpha}{\alpha}\right)^\alpha \,=\,1;\quad\alpha\in\Delta(f),\:\vert\alpha\vert=2d.\\
&\\
(*)&\frac{f_{2d,1}}{u_1}\,\leq\,1\\
&\\
(**)&\frac{u_i}{u_{i-1}}+\frac{f_{2d,i-1}-f_{2d,i}}{u_{i-1}}\,\leq\,1,\quad i=2,\ldots,n,
\end{array}\right.\end{equation}
and where for every $\alpha\in\Delta(f)$, the unknowns $\z_\alpha=(z_{\alpha,i})\in\R^n_+$ satisfy $z_{\alpha,i}=0$ if and only if $\alpha_i=0$.
\end{thm}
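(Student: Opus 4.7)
The plan is to apply Proposition~\ref{prop1} to the shifted polynomial $f_\lambda$ from~(\ref{def1}) for each $\lambda\geq0$, combine the resulting family of bounds with the Lemma, and then merge the outer optimization over $\lambda$ into the inner geometric program via the substitution $u_i:=f_{2d,i}+\lambda$. The key observation is that $f_\lambda$ has the same monomial structure as $f$ except at the monomials $1$ and $x_i^{2d}$: its constant term is $f(0)-\lambda M$, its coefficient on $x_i^{2d}$ is $f_{2d,i}+\lambda$, and all other coefficients are unchanged. Since the definitions of $\Omega$ and $\Delta$ explicitly exclude the monomials $1$ and $x_i^{2d}$, we have $\Delta(f_\lambda)=\Delta(f)$ and $\Delta(f_\lambda)^{<2d}=\Delta(f)^{<2d}$, and the objective as well as the $|\alpha|=2d$ equality constraints of program~(\ref{lw}) are unchanged; only the upper bounds change, from $f_{2d,i}$ to $f_{2d,i}+\lambda$. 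Applying Proposition~\ref{prop1} to $f_\lambda$ therefore gives $G(\lambda)\geq f_\lambda(0)-\rho(\lambda)=f(0)-\lambda M-\rho(\lambda)$, where $\rho(\lambda)$ denotes this $\lambda$-modified GP. Combining with the Lemma,
\[
f_{*,M}\,\geq\,\max_{\lambda\geq0}G(\lambda)\,\geq\,f(0)-p^*,\qquad p^*:=\min_{\lambda\geq0}\bigl[\lambda M+\rho(\lambda)\bigr].
\]

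Next I would fold the outer minimization over $\lambda$ into the GP by setting $u_i:=f_{2d,i}+\lambda$ for $i=1,\dots,n$. The constraint $\lambda\geq0$ becomes $u_1\geq f_{2d,1}$, i.e.\ constraint $(*)$, and the self-consistency relations $u_i-u_{i-1}=f_{2d,i}-f_{2d,i-1}$ become $u_i+f_{2d,i-1}-f_{2d,i}=u_{i-1}$, which when relaxed to an inequality give exactly $(**)$. The $\z$-constraints $\sum_\alpha z_{\alpha,i}\leq f_{2d,i}+\lambda$ rewrite as $\sum_\alpha z_{\alpha,i}/u_i\leq1$, and the objective $\lambda M+(\text{inner GP obj})$ becomes $Mu_1+(\text{inner GP obj})$ up to the additive constant $-Mf_{2d,1}$. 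Thus~(\ref{newgp}) is precisely $\min[Mu_1+(\text{inner GP obj})]$ over this relaxed feasible set.

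The main (and essentially only) technical point is to check that the relaxation of the consistency equalities to the inequalities $(**)$ is harmless in the direction we need, i.e.\ that $\rho_M\geq Mf_{2d,1}+p^*$. Given any $(\z,\u)$ feasible for~(\ref{newgp}), set $\lambda:=u_1-f_{2d,1}\geq0$ by $(*)$; iterating $(**)$ and invoking the sortedness hypothesis $f_{2d,1}\geq\cdots\geq f_{2d,n}$ yields $u_i\leq u_1+f_{2d,i}-f_{2d,1}=f_{2d,i}+\lambda$, so $\sum_\alpha z_{\alpha,i}\leq u_i\leq f_{2d,i}+\lambda$ and $(\z,\lambda)$ is feasible for the parametric problem with objective exactly $Mf_{2d,1}$ less than $Mu_1+(\text{inner obj})$. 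Therefore $\rho_M\geq Mf_{2d,1}+p^*$, and hence $f_{*,M}\geq f(0)-p^*\geq f(0)+Mf_{2d,1}-\rho_M$, which is the claim. (The reverse inequality $\rho_M\leq Mf_{2d,1}+p^*$, should one wish to identify the two values, is immediate by choosing $u_i=f_{2d,i}+\lambda$ at a parametric optimum.)
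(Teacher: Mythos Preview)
Your proof is correct and follows essentially the same strategy as the paper: apply Proposition~\ref{prop1} to $f_\lambda$, invoke the Lagrangian bound~(\ref{lag1}), and substitute $u_i=f_{2d,i}+\lambda$ to fold the outer maximization into a single geometric program. The one place where your argument differs from the paper's is in justifying the relaxation of the equality constraints $u_i-f_{2d,i}=u_{i-1}-f_{2d,i-1}$ to the inequalities~$(**)$. The paper proves that the equality- and inequality-constrained programs have the \emph{same} optimal value, by taking any feasible $(\z,\u)$ for~(\ref{newgp}) and successively increasing $u_2,u_3,\dots,u_n$ until each~$(**)$ is tight, noting that $u_1$ (hence the cost) is unchanged. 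You instead prove only the one inequality actually needed for the theorem, by reading off $\lambda:=u_1-f_{2d,1}$, telescoping~$(**)$ to get $u_i\le f_{2d,i}+\lambda$, and feeding $\z$ directly back into the parametric program for that $\lambda$. Your route is slightly more economical (it avoids the intermediate program with equality constraints altogether), while the paper's route yields the stronger statement $\rho_M=Mf_{2d,1}+p^*$ used later in the proof of Proposition~\ref{p2}(3). A minor remark: the telescoping step does not itself require the ordering $f_{2d,1}\ge\cdots\ge f_{2d,n}$; that hypothesis is only needed so that the coefficients $f_{2d,i-1}-f_{2d,i}$ in~$(**)$ are nonnegative and~(\ref{newgp}) is a bona fide geometric program.
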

A detailed proof can be found in \S \ref{proof}. Observe that the difference between the programs
(\ref{lw}) and (\ref{newgp}) is the presence of the constraints $(*)-(**)$ in the latter, which reflects the new contribution of the
monomial terms $\lambda x_i^{2d}$ in the polynomial $f_\lambda$.

The geometric program (\ref{newgp}) is {\it not} a direct application of Proposition
\ref{prop1} to the polynomial $f_\lambda$  to obtain a lower bound $\overline{G}(\lambda)$ on $G(\lambda)$, followed by a maximization with respect to $\lambda$.
Indeed, this leads to the constraint
$(**)$ in {\it equality} (instead of inequality) form, and so  (\ref{newgp}) would not be a geometric program; however, in the proof we show that this equality constraint can be relaxed to an inequality constraint as in (\ref{newgp}).

\section{Comparison with other bounds}

\subsection*{Comparison with bound of Ghasemi and Marshall} Assume that $f \in \R[\x]_{2d}$, $d\ge 1$.  As in \cite{gha-mar2} we define $f_{{\rm gp}}$ to be $f_{{\rm gp}}:=f(0)-\rho$, the lower bound for $f_*$ obtained in Proposition \ref{prop1}. We also define $f_{{\rm gp},M}$ to be $f_{{\rm gp},M}:=f(0)+Mf_{2d,1}-\rho_M$, the lower bound for $f_{*,M}$ obtained in Theorem \ref{thmain}.
Note that the feasible set of (\ref{newgp}) is nonempty (i.e., $f_{{\rm gp},M}$ is a real number), whereas the feasible set of  (\ref{lw}) may be empty (i.e., $f_{{\rm gp}}=-\infty$), even in the case where each $f_{2d,i}$ is strictly positive.

\begin{proposition} \label{p2} \
\begin{enumerate}
\item $f_{{\rm gp},M} \ge f_{{\rm gp}}$.
\item If $M'\le M$  then $f_{{\rm gp},M'} \ge f_{{\rm gp},M}$.
\item $f_{{\rm gp}} = \lim\limits_{M\rightarrow \infty}f_{{\rm gp},M}$.
\end{enumerate}
\end{proposition}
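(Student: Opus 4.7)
My plan is to prove parts (1) and (2) by direct feasibility translations between the two geometric programs (\ref{lw}) and (\ref{newgp}), and to derive (3) by combining the monotonicity from (2) with a compactness-based continuity argument for a perturbed version of (\ref{lw}).

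For (1), given a feasible $\z$ of (\ref{lw}), set $u_i := f_{2d,i}$. Then $(*)$ reads $f_{2d,1}/f_{2d,1} = 1 \le 1$ and $(**)$ reads $f_{2d,i}/f_{2d,i-1} + (f_{2d,i-1}-f_{2d,i})/f_{2d,i-1} = 1 \le 1$, while $\sum_\alpha z_{\alpha,i}/u_i \le 1$ is exactly $\sum_\alpha z_{\alpha,i} \le f_{2d,i}$, the first-type constraint of (\ref{lw}). The objective of (\ref{newgp}) at $(\z, u)$ equals $M f_{2d,1}$ plus the objective of (\ref{lw}) at $\z$, so infimizing yields $\rho_M \le M f_{2d,1} + \rho$, equivalently $f_{{\rm gp},M} \ge f_{{\rm gp}}$ (trivially true when $f_{{\rm gp}} = -\infty$). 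For (2), the parameter $M$ does not appear in any constraint of (\ref{newgp}), so any $(\z, u)$ feasible at $M$ is also feasible at $M' \le M$ and its objective drops by $(M-M') u_1 \ge (M-M') f_{2d,1}$ using $(*)$; applied to an optimizer for $M$, this yields $\rho_{M'} \le \rho_M - (M-M') f_{2d,1}$, which rearranges to $f_{{\rm gp},M'} \ge f_{{\rm gp},M}$.

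For (3), monotonicity from (2) ensures $\lim_{M\to\infty} f_{{\rm gp},M}$ exists in $[-\infty,\infty)$, and (1) supplies $\lim \ge f_{{\rm gp}}$. For the reverse inequality, write $R(\z)$ for the objective function of (\ref{lw}) and let $\rho^{(\lambda)}$ denote the optimal value of the program obtained from (\ref{lw}) by replacing each $f_{2d,i}$ with $f_{2d,i} + \lambda$, so $\rho^{(0)} = \rho$. Let $(\z^M, u^M)$ be an optimizer of (\ref{newgp}) and set $\lambda^M := u^M_1 - f_{2d,1} \ge 0$; constraint $(**)$ yields $u^M_i \le f_{2d,i} + \lambda^M$, so $\z^M$ is feasible for the perturbed program and $R(\z^M) \ge \rho^{(\lambda^M)}$, whence $\rho_M - M f_{2d,1} = M \lambda^M + R(\z^M) \ge \rho^{(\lambda^M)}$. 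When $\rho < \infty$, the upper bound $\rho_M \le M f_{2d,1} + \rho$ from (1) forces $M \lambda^M \le \rho$, so $\lambda^M \to 0$; combined with the continuity claim $\rho^{(\lambda)} \to \rho$ as $\lambda \to 0^+$, this gives $\liminf (\rho_M - M f_{2d,1}) \ge \rho$, i.e., $\limsup f_{{\rm gp},M} \le f_{{\rm gp}}$. When $\rho = \infty$, either $\lambda^M \to 0$ along a subsequence, in which case $\rho^{(\lambda^M)} \to \infty$ by the same compactness analysis, or $\lambda^M \ge c > 0$ along a subsequence, in which case $\rho_M - M f_{2d,1} \ge Mc \to \infty$ directly; either way $f_{{\rm gp},M} \to -\infty = f_{{\rm gp}}$.

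The main technical step is the continuity/compactness claim for $\rho^{(\lambda)}$ as $\lambda \to 0^+$. I would prove it as follows: the feasible set of each perturbed program is uniformly bounded ($z_{\alpha,i} \le f_{2d,i} + \lambda$), the objective $R$ blows up as any $z_{\alpha,i}$ with $|\alpha|<2d$ and $\alpha_i>0$ tends to $0$, and the equality constraints for $|\alpha|=2d$ keep those coordinates bounded away from zero. Consequently, a sequence of near-optimizers for $\lambda_k \to 0^+$ with bounded objective stays in a compact subset of the open positivity region and admits a subsequential limit feasible for (\ref{lw}) with matching objective value, yielding $\rho \le \liminf \rho^{(\lambda_k)}$ when $\rho < \infty$ and $\rho^{(\lambda_k)} \to \infty$ when $\rho = \infty$. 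The delicate point is ruling out degenerate limits on the boundary of the positivity region, which is precisely where the blow-up of $R$ and the positivity forced by the degree-$2d$ equality constraints do the essential work.
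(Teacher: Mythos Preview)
Your approach is essentially the same as the paper's, including the key compactness argument in part~(3) (the paper carries it out directly on near-optimal points of (\ref{newgp}) rather than introducing the auxiliary quantity $\rho^{(\lambda)}$, but the substance is identical). Two minor technical points deserve attention.

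In part~(1), your choice $u_i := f_{2d,i}$ is invalid when some $f_{2d,i}=0$: the $u_i$ appear in denominators in (\ref{newgp}) and must be strictly positive. The paper takes $u_i := f_{2d,i}+\delta$ for $\delta>0$ and lets $\delta\to 0^+$ at the end, which is the easy fix.

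In part~(3), you assume an optimizer $(\z^M,u^M)$ of (\ref{newgp}) exists, but attainment of the infimum is not immediate since the feasible set is open in the coordinates $z_{\alpha,i}$ with $\alpha_i>0$. The paper avoids this by working with $\epsilon$-optimal feasible points throughout, which is all the compactness argument needs; your sketch goes through verbatim once you make that adjustment.
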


\begin{proof}
(1) If the program (\ref{lw}) in Proposition \ref{prop1} has no feasible solutions then $f_{{\rm gp}}=-\infty$ so $f_{{\rm gp},M} \ge f_{{\rm gp}}$. Suppose now that (\ref{lw}) has a feasible solution $\z$. In particular, $f_{2d,i}\ge 0$ for $i=1,\dots,n$. Fix $\delta >0$. Then $(\z,\u)$ with $u_i=f_{2d,i}+\delta$ for all $i=1,\ldots,n$, is feasible for the program
(\ref{newgp}) in Theorem \ref{thmain}. This implies $\rho_M\leq M(f_{2d,1}+\delta)+\rho$ for all $\delta>0$ so $\rho_M\leq Mf_{2d,1}+\rho$ and
\begin{eqnarray*}
f_{{\rm gp},M}&=&f(0)+Mf_{2d,1}-\rho_M\\
&\geq& f(0)+Mf_{2d,1}-Mf_{2d,1}-\rho\,=\,f(0)-\rho\,=\,f_{{\rm gp}}.\end{eqnarray*}

(2) Suppose $M'\le M$. Observe that the set of feasible solutions for (\ref{newgp}) does not depend on $M$. Let $(\z,\u)$ be a feasible solution of (\ref{newgp}). Since $M'<M$ and $u_1\ge f_{2d,1}$ it follows that $M'(u_1-f_{2d,1})\le M(u_1-f_{2d,1})$. This implies that $\rho_{M'} -M'f_{2d,1} \le \rho_M-Mf_{2d,1}$, so $$f_{{\rm gp},M'} = f(0)+M'f_{2d,1}-\rho_{M'} \ge f(0)+Mf_{2d,1}-\rho_M = f_{{\rm gp},M}.$$

(3) It remains to show that if there exists a real number $N$ such that $\rho_M-Mf_{2d,1}\le N$ for each real $M>0$ then $\rho \le N$. Suppose $\rho_M-Mf_{2d,1} \le N$ for all $M>0$. Then for each real $\epsilon >0$ there exists a feasible solution $(\z,\u) = (\z_M,\u_M)$ of (\ref{newgp}) such that \begin{equation}
\label{P}
M(u_1-f_{2d,1})+\dis\sum_{\alpha\in\Delta(f)^{<2d}}(2d-\vert\alpha\vert)\left[
\left(\frac{f_\alpha}{2d}\right)^{2d}\,\left(\frac{\alpha}{\z_\alpha}\right)^\alpha
\right]^{1/(2d-\vert\alpha\vert)}\le N+\epsilon,
\end{equation}
for $M=1,2,\cdots$. As explained in the proof of Theorem \ref{thmain}, we may assume $u_i-f_{2d,i}= u_j-f_{2d,j}$ for all $i,j = 1,\dots,n$. Let $\lambda = \lambda_M = u_1-f_{2d,1}$, so $u_i=f_{2d,i}+\lambda$ for $i=1,\dots,n$. From inequality (\ref{P}) we see that $M\lambda\le N+\epsilon$, so $\lambda \rightarrow 0$ as $M \rightarrow \infty$. Since $0\le z_{\alpha,i} \le u_i =f_{2d,i}+\lambda$, the sequence $(\z,\u) = (\z_M,\u_M)$ is bounded so it has some convergent subsequence converging to some $(\z^*,\u^*)$. If $\z^*$ is a feasible point of the program (\ref{lw}) then we see by continuity that $$\dis\sum_{\alpha\in\Delta(f)^{<2d}}(2d-\vert\alpha\vert)\left[
\left(\frac{f_\alpha}{2d}\right)^{2d}\,\left(\frac{\alpha}{{\z^*}_\alpha}\right)^\alpha
\right]^{1/(2d-\vert\alpha\vert)}\le N+\epsilon$$ so $\rho \le N+\epsilon$ and we are done. The fact that $\z^*$ is a feasible point for (\ref{lw}) is more or less clear, by continuity, except possibly for the fact that $\alpha_i >0$ $\Rightarrow$ ${z^*}_{\alpha,i}>0$. If $|\alpha|=2d$ this follows from the equation $\left(\frac{2d}{f_\alpha}\right)^{2d}\,\left(\frac{\z_\alpha}{\alpha}\right)^\alpha \,=\,1$ which, since the $z_{\alpha,i}$ are bounded, implies that the $z_{\alpha,i}$ such that $\alpha_i>0$ are bounded away from zero. Similarly for $|\alpha|<2d$ the inequality (\ref{P}) implies that the $z_{\alpha,i}$ such that $\alpha_i>0$ are bounded away from zero.
\end{proof}

\subsection*{Comparison with bounds of Lasserre} Recall that \[f_{{\rm sos}}:=\sup\{\lambda: f-\lambda \in \sum \R[\x]^2\}.\] The inequality $f_* \ge f_{{\rm sos}}$ is trivial. The inequality $f_{{\rm sos}} \ge f_{{\rm gp}}$ is established in \cite[Corollary 3.6]{gha-mar2}. As explained in \cite{lasserre1}, $f_{{\rm sos}}$ is computable by semidefinite programming. Similarly, for each real $M>0$ and each integer $k\ge0$ define $f_{{\rm sos},M}^{(k)}$ to be the supremum of all real numbers $\lambda$ such that $$f-\lambda = \sigma+\tau(M-\sum x_i^{2d})$$ for some $\sigma,\tau \in \sum \R[\x]^2$, $\deg(\sigma)\le 2k+2d$, $\deg(\tau)\le 2k$. As explained in \cite{lasserre1}, the sequence $f_{{\rm sos},M}^{(k)}$, $k=0,1,\cdots$ is nondecreasing and converges to $f_{*,M}$ as $k \rightarrow \infty$ and each $f_{{\rm sos},M}^{(k)}$ is computable by semidefinite programming.

\begin{proposition} $f_{{\rm sos},M}^{(0)} \ge f_{{\rm gp},M}$.
\end{proposition}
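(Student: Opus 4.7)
The plan is to exhibit, for each $\epsilon>0$, a decomposition
\[f-(f_{{\rm gp},M}-\epsilon)\,=\,\sigma+\tau\Bigl(M-\sum_{i=1}^n x_i^{2d}\Bigr)\]
with $\sigma\in\sum\R[\x]^2$ of degree $\le 2d$ and $\tau\ge 0$ a nonnegative constant (which is itself a sum of squares of degree $\le 0$). Such a representation immediately yields $f_{{\rm sos},M}^{(0)}\ge f_{{\rm gp},M}-\epsilon$, and then letting $\epsilon\downarrow 0$ finishes the proof.

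The first step is to pick an $\epsilon$-optimal feasible point $(\z^*,\u^*)$ of \eqref{newgp} and set $\tau:=u_1^*-f_{2d,1}$. Constraint $(*)$ forces $\tau\ge 0$, while telescoping $(**)$ gives $u_i^*-f_{2d,i}\le u_{i-1}^*-f_{2d,i-1}\le\cdots\le\tau$ for every $i$. Next introduce the auxiliary polynomial
\[g\,:=\,f-\tau\Bigl(M-\sum_i x_i^{2d}\Bigr),\]
whose coefficients satisfy $g(0)=f(0)-\tau M$, $g_{2d,i}=f_{2d,i}+\tau\ge u_i^*$, and $g_\alpha=f_\alpha$ otherwise. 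In particular $\Omega(g)=\Omega(f)$, $\Delta(g)=\Delta(f)$, and the equality constraints of the Proposition~\ref{prop1} program applied to $g$ coincide with those already appearing in \eqref{newgp}. Combined with $\sum_{\alpha\in\Delta(g)} z^*_{\alpha,i}\le u_i^*\le g_{2d,i}$, this shows $\z^*$ is feasible for the Proposition~\ref{prop1} program applied to $g$; denoting its objective value there by $\rho'$, one has $\rho'\le\rho_M+\epsilon-Mu_1^*$.

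Invoking the sum-of-squares form of \cite[Cor.~3.6]{gha-mar2} applied to $g$, the polynomial $g-(g(0)-\rho')$ belongs to $\sum\R[\x]^2$ and has degree $\le 2d$. A direct computation gives
\[g(0)-\rho'\,\ge\,(f(0)-\tau M)-(\rho_M+\epsilon-Mu_1^*)\,=\,f(0)+Mf_{2d,1}-\rho_M-\epsilon\,=\,f_{{\rm gp},M}-\epsilon,\]
so adding the nonnegative constant $(g(0)-\rho')-(f_{{\rm gp},M}-\epsilon)$, which is trivially a square, produces an SOS polynomial $\sigma:=g-(f_{{\rm gp},M}-\epsilon)$ of degree $\le 2d$. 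Rearranging the definition of $g$ yields the required representation of $f-(f_{{\rm gp},M}-\epsilon)$.

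The main obstacle is the appeal to the sum-of-squares strengthening of \cite[Cor.~3.6]{gha-mar2}: one must use more than the scalar inequality $G(\lambda)\ge f_\lambda(0)-\rho(\lambda)$ recorded in Proposition~\ref{prop1}, namely that for any feasible $\z^*$ the weighted AM--GM bounds on $|f_\alpha\x^\alpha|$ by $\sum_i z_{\alpha,i}x_i^{2d}$ and the constant term $(2d-|\alpha|)[\cdots]^{1/(2d-|\alpha|)}$ assemble into a genuine SOS certificate for $g-(g(0)-\rho')$ of degree $\le 2d$. This SOS refinement is exactly what allows a geometric-programming bound to be repackaged as a Putinar-type certificate at the minimal relaxation order $k=0$.
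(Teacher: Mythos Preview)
Your proof is correct and follows essentially the same route as the paper's: both arguments pass to the Lagrangian $f_\tau = f - \tau(M-\sum x_i^{2d})$ for a suitable $\tau\ge 0$ and invoke the SOS form of \cite[Cor.~3.6]{gha-mar2} to obtain a Putinar certificate with constant multiplier, yielding $f_{{\rm sos},M}^{(0)}\ge f_{{\rm gp},M}-\epsilon$. The only difference is packaging: the paper uses the identity $f_{{\rm gp},M}=\max_{\lambda\ge 0}(f_\lambda)_{{\rm gp}}$ already established in the proof of Theorem~\ref{thmain} and then quotes $(f_\lambda)_{{\rm sos}}\ge (f_\lambda)_{{\rm gp}}$, whereas you reconstruct that identity by hand, extracting $\tau=u_1^*-f_{2d,1}$ from an $\epsilon$-optimal feasible point of \eqref{newgp} and verifying directly (via the telescoping of $(**)$) that $\z^*$ is feasible for the Proposition~\ref{prop1} program for $g=f_\tau$.
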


\begin{proof} By the proof of Theorem \ref{thmain}, $f_{{\rm gp},M} = \max\limits_{\lambda \ge 0} \overline{G}(\lambda)$ where $\overline{G}(\lambda):= (f_{\lambda})_{{\rm gp}}$. By \cite[Corollary 3.6]{gha-mar2}, $(f_{\lambda})_{{\rm sos}} \ge (f_{\lambda})_{{\rm gp}}$. Thus for any real $\epsilon >0$ there exists $\lambda \ge 0$ such that $(f_{\lambda})_{{\rm sos}} \ge (f_{\lambda})_{{\rm gp}} \ge f_{{\rm gp},M}-\epsilon$, so there exists $\sigma \in \sum \R[\x]^2$ such that $f_\lambda-(f_{{\rm gp},M}-2\epsilon)=\sigma$, i.e., $f-(f_{{\rm gp},M}-2\epsilon)= \sigma+\lambda(M-\sum x_i^{2d})$. It follows that $f_{{\rm sos},M}^{(0)} \ge f_{{\rm gp},M}-2\epsilon$. Since $\epsilon>0$ is arbitrary it follows that $f_{{\rm sos},M}^{(0)} \ge f_{{\rm gp},M}$.
\end{proof}

\begin{rem} \label{remark} \

(1) According to \cite[Cor. 3.4]{gha-mar2}, $|\Omega(f)| = 1$ $\Rightarrow$ $f_{{\rm gp}} = f_{{\rm sos}} = f_*$. The same is true (trivially) if $|\Omega(f)|=0$. Thus if $|\Omega(f)| \le 1$ and $f$ achieves its global minimum in the ball $B_M$ then \[f_* = f_{*,M} \ge f_{{\rm sos},M}^{(0)} \ge f_{{\rm gp},M} \ge f_{{\rm gp}} = f_{{\rm sos}} = f_*,\]
so
\[f_{*,M} = f_{{\rm sos},M}^{(0)} = f_{{\rm gp},M} = f_{{\rm gp}} = f_{{\rm sos}} = f_*.\]

(2) There are explicit formulas for $f_{{\rm gp}}$ and $f_{{\rm gp},M}$ if $|\Delta(f)| =0$. Suppose $|\Delta(f)| =0$. As usual, we suppose that $f_{2d,1}\ge \dots \ge f_{2d,n}$. Then
\[ f_{{\rm gp}} = \begin{cases} f(0) &\text{if } f_{2d,n}\ge 0 \\ -\infty &\text{if } f_{2d,n}<0 \end{cases},\]
and
\[ f_{{\rm gp},M} = \begin{cases} f(0) &\text{if } f_{2d,n}\ge 0 \\ f(0)+Mf_{2d,n} &\text{if } f_{2d,n} < 0 \end{cases}.\]

(3) There are also explicit formulas for $f_{{\rm gp}}$ and $f_{{\rm gp},M}$ if $|\Delta(f)| =1$ and $f_{2d,i}=1$, $i=1,\dots,n$. Suppose that $|\Delta(f)| =1$ and $f_{2d,i}=1$, $i=1,\dots,n$. Let $\Delta(f) = \{ \alpha\}$. There are two cases to consider:

Case (i). Suppose $|\alpha|= 2d$. In this case
\[ f_{{\rm gp}} = \begin{cases} f(0) &\text{if } (\frac{f_{\alpha}}{2d})^{2d} \alpha^{\alpha} \le 1 \\ -\infty &\text{if } (\frac{f_{\alpha}}{2d})^{2d} \alpha^{\alpha} > 1 \end{cases},\]
and
\[ f_{{\rm gp},M} = \begin{cases} f(0) &\text{if } (\frac{f_{\alpha}}{2d})^{2d} \alpha^{\alpha} \le 1 \\ f(0)-M\cdot([(\frac{f_{\alpha}}{2d})^{2d}\alpha^{\alpha}]^{1/2d}-1) &\text{if } (\frac{f_{\alpha}}{2d})^{2d} \alpha^{\alpha} > 1 \end{cases}.\]

Case (ii). Suppose $|\alpha|< 2d$. In this case
\[ f_{{\rm gp}} = f(0)-[2d-|\alpha|][(\frac{f_{\alpha}}{2d})^{2d}\alpha^{\alpha}]^{1/(2d-|\alpha|)},\]
and
\[ f_{{\rm gp},M} = \begin{cases} f(0)-[2d-|\alpha|][(\frac{f_{\alpha}}{2d})^{2d}\alpha^{\alpha}]^{1/(2d-|\alpha|)} &\text{if } M\ge |\alpha|\cdot [(\frac{f_{\alpha}}{2d})^{2d}\alpha^{\alpha}]^{1/(2d-|\alpha|)} \\ f(0)+M-|f_{\alpha}|[(\frac{M}{|\alpha|})^{|\alpha|}\alpha^{\alpha}]^{1/2d} &\text{if } M < |\alpha|\cdot [(\frac{f_{\alpha}}{2d})^{2d}\alpha^{\alpha}]^{1/(2d-|\alpha|)} \end{cases}.\]
\end{rem}

\begin{exm} Suppose $n=1$, $2d=6$, $f = x^6+3x^4-9x^2$. Applying Remark \ref{remark}(3), Case (ii), we see that $f_{{\rm gp}} = -2\cdot 3^{3/2} \approx -10.3923$ and
\[ f_{{\rm gp},M} = \begin{cases} -2\cdot 3^{3/2} &\text{if } M \ge 3^{3/2} \\ M-9M^{1/3} &\text{if } M < 3^{3/2} \end{cases}.\]
In this example one checks that $f_* = -5$, and
\[ f_{*,M} = \begin{cases} -5 &\text{if } M \ge 1 \\ M+3M^{2/3}-9M^{1/3} &\text{if } M < 1 \end{cases}.\]
\end{exm}

\section{Numerical computations}

%\subsection*{Running time efficiency}
To compare the running time efficiency of computation of $f_{{\rm gp},M}$ using geometric
programming with computation of $f^{(0)}_{{\rm sos},M}$ using semidefinite programming, we set up a test over $10$ polynomials for each case to keep track of the running times. The polynomials considered had highest degree part $\sum x_i^{2d}$ with the lower degree coefficients randomly chosen integers between $-10$  and $10$, and  $M$ was taken to be a random integer between
$1$ and $10^5$ (Table \ref{T1})\footnote{\textbf{Hardware and Software specifications.} Processor: Intel\textregistered~ Core\texttrademark2 Duo CPU P8400 @
2.26GHz, Memory: 2 GB, OS: Ubuntu 12.04-32 bit, \textsc{Sage}-4.8}. The source code of the \textsc{Sage} program to compute $f_{{\rm gp}, M}$ and
$f_{{\rm sos}, M}^{(0)}$, developed by the first author, is available at \href{http://goo.gl/iI3Y0}{http://goo.gl/iI3Y0}.
Table \ref{T2} demonstrates the running time efficiency of computing $f_{{\rm gp},M}$ for random polynomials $f$ and random integers $M$ chosen as before but for relatively large
$n$ and $2d$ and with sparsity conditions on the size of $\Omega(f)$.

\begin{table}[ht]
\caption{Average running time for $f_{{\rm gp},M}$ and $f_{{\rm sos},M}^{(0)}$ (seconds)}\label{T1}
\centering
\begin{tabular}{|c|c|cccc|}
\hline
$n$ & $2d$ & 4 & 6 & 8 & 10 \\
\hline
\multirow{2}{*}{3} & $f_{{\rm gp},M}$  & 0.03 & 0.09 & 0.96 & 4.73 \\
				   & $f^{(0)}_{{\rm sos},M}$ & 0.05 & 0.56 & 6.42 & 62.28 \\
\hline
\multirow{2}{*}{4} & $f_{{\rm gp},M}$  & 0.04 & 0.89 & 34.90 & 278.43 \\
				   & $f^{(0)}_{{\rm sos},M}$ & 0.16 & 7.74 & 154.17 & - \\
\hline
\multirow{2}{*}{5} & $f_{{\rm gp},M}$  & 0.10 & 8.25 & 48.28 & 1825.56 \\
				   & $f^{(0)}_{{\rm sos},M}$ & 0.53 & 69.49 & - & - \\
\hline
\end{tabular}
\end{table}
\begin{table}[ht]
\caption{Average running time for $f_{{\rm gp},M}$ (seconds) for various constraints on $|\Omega(f)|$ %calculation with constraint on $|\Omega(f)|$
}\label{T2}
\centering
\begin{tabular}{|c|c|ccccc|}
\hline
$n$ & $2d\backslash|\Omega(f)|$ & 10 & 20 & 30 & 40 & 50 \\
\hline
\multirow{3}{*}{10} & 20 & 0.52 & 0.62 & 1.91 & 4.36 & 5.63 \\
				    & 40 & 0.75 & 1.42 & 2.1 & 5.08 & 11.16 \\
				    & 60 & 0.86 & 1.72 & 3.1 & 6.48 & 13.07 \\
\hline
\multirow{2}{*}{20} & 20 & 3.69 & 18.11 & 17.11 & 44.78 & 46.51 \\
				    & 40 & 3.75 & 18.82 & 37.52 & 59.55 & 114.05 \\
				    & 60 & 7.31 & 27.33 & 46.05 & 96.86 & 164.56 \\
\hline
\multirow{2}{*}{30} & 20 & 3.16 & 19.63 & 34.81 & 44.04 & 175.5 \\
				    & 40 & 6.07 & 22.72 & 105.77 & 217.07 & 315.85 \\
				    & 60 & 13.71 & 72.81 & 132.04 & 453.05 & 667.87 \\
\hline
\multirow{2}{*}{40} & 20 & 6.67 & 37.22 & 63.09 & 131.03 & 481.71 \\
				    & 40 & 11.21 & 76.03 & 83.91 & 458.75 & 504.6 \\
				    & 60 & 24.97 & 114.45 & 355.56 & 796.52 & 1340.76 \\
\hline
\end{tabular}
\end{table}

%\subsection*{Accuracy}
We compare values of
$f_{{\rm gp}, M}$ with corresponding values of $f^{(0)}_{{\rm sos}, M}$ for various choices of $f$ and $M$.% The values of $f_{*,M}$ in these examples were obtained by ad hoc methods.
\begin{exm}
Let $f= w^6 + x^6 + y^6 + z^6 + 7w^4y - 10w^3xy + 5wx^3y- 3w^3y^2 - 3w^2xy^2 + 9wxy^3 - 10xy^4 + 7w^4z +wx^3z - 5xyz^3 - 5z^5 + 8w^4 + 8w^2x^2 - 4wx^3 -w^3y + 2wx^2y + 3w^2y^2 - wxy^2 + wy^3 +7w^2xz - 3y^3z + w^2z^2 + 2y^2z^2 - 2w^3 + 8x^3 -5w^2y + 8x^2z + 3xz - 3z + 5$, then:
\[
\begin{array}{ll}
	f_{{\rm gp},1}\approx -39.022 & f^{(0)}_{{\rm sos}, 1}\approx -5.519 \\
	f_{{\rm gp},10}\approx -213.631 & f^{(0)}_{{\rm sos}, 10}\approx -67.947 \\
	f_{{\rm gp},10^2}\approx -1215.730 & f^{(0)}_{{\rm sos}, 10^2}\approx -489.009 \\
	f_{{\rm gp}}\approx -9580211.794 & f_{{\rm sos}}\approx -458107.262
\end{array}
\]
\end{exm}
\begin{exm}
Let $f= 8w^6 + 6x^6 + 4y^6 + 2z^6 - 3w^3x^2 + 8w^2xyz - 9xz^4 + 2w^2xz - 3xz^2$, then
\[
\begin{array}{ll}
	f_{{\rm gp}, 1}\approx -6.605 & f^{(0)}_{{\rm sos}, 1}\approx -6.605 \\
	f_{{\rm gp}, 10}\approx -27.151 & f^{(0)}_{{\rm sos}, 10}\approx -27.151 \\
	f_{{\rm gp}, 10^2}\approx -73.458 & f^{(0)}_{{\rm sos}, 10^2}\approx -73.458 \\
	f_{{\rm gp}}\approx -74.971 & f_{{\rm sos}}\approx -74.971
\end{array}
\]
\end{exm}
\begin{exm}
For $f= -7x^3y^4 + 13x^2y^5 + 5y^4z + 18xz^4 - 5z^2$ with $2d=8$
\[
\begin{array}{ll}
	f_{{\rm gp}, 1}\approx -23.4559 & f^{(0)}_{{\rm sos}, 1}\approx -19.4797 \\
	f_{{\rm gp}, 10}\approx -117.9727 & f^{(0)}_{{\rm sos}, 10}\approx -92.6547 \\
	f_{{\rm gp}, 10^2}\approx -736.0259 & f^{(0)}_{{\rm sos}, 10^2}\approx -668.221
\end{array}
\]
\end{exm}

We can compute $f_{{\rm gp},M}$ in cases where computation of $f_{{\rm sos},M}^{(0)}$ breaks down.
\begin{exm}
For $f= -9w^{12}x^9y^{12}z^5 + 19w^8x^2yz^{20} - 3w^{11}x^6y^9z^4 - 3w^{13}x^{14}z - 18w^4x^{12}y^3$ with $2d=40$
\[
\begin{array}{l}
	f_{{\rm gp}, 1}\approx -20.0645 \\
	f_{{\rm gp}, 10}\approx -106.4946 \\
	f_{{\rm gp}, 10^2}\approx -584.027
\end{array}
\]
\end{exm}
%\begin{exm}
%For $f=x^{40}+y^{40}+z^{40}+x^{20}y^{7}z^{12} + 6x^{19}y^9z^9 - 9x^7y^{13}z^8 + x^9y^{11}z^7 - x^{10}y^5z^4$, we have
%$f_*=f_{*,10^4}\approx -1477.5781$, the global minimum occurs at $x\approx 1.23064, y\approx 1.21784, z\approx -1.20979$ and $f_{{\rm gp}}\approx -177887.4830$, but $f_{{\rm gp},10^4}\approx -7219.0861$.
%\end{exm}

\begin{exm}
For\\
 $f = \sum_{i=0}^{19} x_i^{20} + x_2^6x_3^3x_5x_7x_8^3x_9x_{10}x_{11}^2x_{12} -
   17x_1x_2x_3x_6x_7x_9^2x_{10}x_{12}^4x_{14}^4x_{16}x_{18}x_{19} +
   19x_4^6x_5^4x_6^2x_9x_{12}x_{17}^2x_{18}x_{19}^2 -
   10x_0x_1^5x_2x_8^3x_{12}x_{15}x_{17}x_{18}^2x_{19}^4 -
   11x_0^2x_2x_4^3x_5x_6x_{12}^4x_{15}^4x_{16}x_{17} +
   15x_1^2x_5^3x_6x_8x_9x_{14}^2x_{15}^4x_{18}^2x_{19}^2 +
   2x_1x_2^2x_4^3x_6x_{10}x_{11}^2x_{13}x_{15}x_{17}x_{18}x_{19}^3$,
\[
\begin{array}{l}
	f_{{\rm gp}, 10}\approx -41.6538 \\
	f_{{\rm gp},10^2}\approx -340.6339 \\
	f_{{\rm gp}, 10^3}\approx -2774.217 \\
	f_{{\rm gp}}\approx -84853211002.07141
\end{array}
\]
\end{exm}

\section{Proof of Theorem \ref{thmain}}
\label{proof}

With $\lambda\geq0$ fixed, let us apply Proposition \ref{prop1} to the polynomial $f_\lambda\in\R[\x]_{2d}$,
so as to obtain a lower bound $\overline{G}(\lambda)$ on $G(\lambda)$
defined in (\ref{Glambda}). Then $\overline{G}(\lambda):=f_\lambda(0)-\rho_\lambda$, with

\[\left\{\begin{array}{rl}
\rho_\lambda=\dis\min_{\z_\alpha}&\dis\sum_{\alpha\in\Delta(f_\lambda)^{<2d}}(2d-\vert\alpha\vert)\left[
\left(\frac{(f_\lambda)_\alpha}{2d}\right)^{2d}\,\left(\frac{\alpha}{\z_\alpha}\right)^\alpha
\right]^{1/(2d-\vert\alpha\vert)}\\
&\\
\mbox{s.t.}&\dis\sum_{\alpha\in\Delta(f_\lambda)}z_{\alpha,i} \le (f_\lambda)_{2d,i},\quad i=1,\ldots,n\\
&\\
&\left(\frac{2d}{(f_\lambda)_\alpha}\right)^{2d}\,\left(\frac{\z_\alpha}{\alpha}\right)^\alpha \,=\,1;\quad\alpha\in\Delta(f_\lambda),\:\vert\alpha\vert=2d.
\end{array}\right.\]
Notice that $\Omega(f_\lambda)=\Omega(f)$, and $(f_\lambda)_\alpha=f_\alpha$ for all $\alpha\in\Omega(f)$. Moreover,
\[f_\lambda(0)\,=\,f(0)-\lambda M;\quad
(f_\lambda)_{2d,i}=f_{2d,i}+\lambda,\:\forall i=1,\ldots,n.\]
And so, with $\lambda\geq0$, $\overline{G}(\lambda):=f(0)-M\lambda-\rho_\lambda$, with
\[\left\{\begin{array}{rl}
\rho_\lambda=\dis\min_{\z_\alpha}&\dis\sum_{\alpha\in\Delta(f)^{<2d}}(2d-\vert\alpha\vert)\left[
\left(\frac{f_\alpha}{2d}\right)^{2d}\,\left(\frac{\alpha}{\z_\alpha}\right)^\alpha
\right]^{1/(2d-\vert\alpha\vert)}\\
&\\
\mbox{s.t.}&\dis\sum_{\alpha\in\Delta(f)}z_{\alpha,i}\,\leq\,f_{2d,i}+\lambda,\quad i=1,\ldots,n\\
&\\
&\left(\frac{2d}{f_\alpha}\right)^{2d}\,\left(\frac{\z_\alpha}{\alpha}\right)^\alpha \,=\,1;\quad\alpha\in\Delta(f),\:\vert\alpha\vert=2d,
\end{array}\right.\]
is a lower bound on $G(\lambda)$ for every $\lambda\geq0$.
Next, recall that
\[f_{2d,1}\,\geq\, f_{2d,2}\,\geq\,\cdots\,\geq\, f_{2d,n}.\]
Let
\[\lambda_0 :=\max\{ 0,-f_{2d,n}\}.\]
For $0 \le \lambda <\lambda_0$, $f_{2d,n}+\lambda <0$, so $\rho_{\lambda}=\infty$, i.e., $\overline{G}(\lambda) =-\infty$. For $\lambda \ge \lambda_0$, $\rho_{\lambda} \le \rho_{\lambda_0}$, i.e., $\overline{G}(\lambda) \ge \overline{G}(\lambda_0)-M(\lambda-\lambda_0)$. Consequently,
\[\max_{\lambda\geq0}\overline{G}(\lambda)=\max_{\lambda\geq \lambda_0}\overline{G}(\lambda) = \max_{\lambda> \lambda_0}\overline{G}(\lambda).\]  For $\lambda >\lambda_0$, using the new variables $u_i:=f_{2d,i}+\lambda>0$, $i=1,\dots,n$, one has:
\[u_i=u_{i-1}-(f_{2d,i-1}-f_{2d,i}),\quad i=2,\ldots,n,\]
or equivalently,
\[\frac{u_i}{u_{i-1}}+\frac{f_{2d,i-1}-f_{2d,i}}{u_{i-1}}\,=\,1,\quad i=2,\ldots,n.\]
In addition, the constraints $\dis\sum_{\alpha\in\Delta}z_{\alpha,i}\leq f_{2d,i}+\lambda$, read
\[\dis\sum_{\alpha\in\Delta}\frac{z_{\alpha,i}}{u_i}\leq 1,\qquad i=1,\ldots,n.\]
Finally, as $\lambda=u_1-f_{2d,1}$, then $f(0)-M\lambda=f(0)+M f_{2d,1}-Mu_1$. Therefore,
for $\lambda> \lambda_0$ fixed, and
\[u_1=\lambda+f_{2d,1};\quad\frac{u_i}{u_{i-1}}+\frac{f_{2d,i-1}-f_{2d,i}}{u_{i-1}}\,=\,1,\quad i=2,\ldots,n,\]
$\overline{G}(\lambda)=f(0)+Mf_{2d,1}-\theta_M(\u)$ with
\[\begin{array}{rl}
\theta_M(\u)=Mu_1+\dis\min_{\z_\alpha}&\dis\sum_{\alpha\in\Delta(f)^{<2d}}(2d-\vert\alpha\vert)\left[
\left(\frac{f_\alpha}{2d}\right)^{2d}\,\left(\frac{\alpha}{\z_\alpha}\right)^\alpha
\right]^{1/(2d-\vert\alpha\vert)}\\
&\\
\mbox{s.t.}&\dis\sum_{\alpha\in\Delta(f)}\frac{z_{\alpha,i}}{u_i}\leq 1,\qquad i=1,\ldots,n\\
&\\
&\left(\frac{2d}{f_\alpha}\right)^{2d}\,\left(\frac{\z_\alpha}{\alpha}\right)^\alpha \,=\,1;\quad\alpha\in\Delta(f),\:\vert\alpha\vert=2d.
\end{array}\]
And so,
\[\max_{\lambda\geq0}\overline{G}(\lambda)=\max_{\lambda\geq \lambda_0}\overline{G}(\lambda) = \max_{\lambda> \lambda_0}\overline{G}(\lambda) =f(0)+Mf_{2d,1}-\rho_M,\]
where
\begin{equation}
\label{aux1}
\begin{array}{rl}
\rho_M=\dis\min_{\z_\alpha,\u}&Mu_1+\dis\sum_{\alpha\in\Delta(f)^{<2d}}(2d-\vert\alpha\vert)\left[
\left(\frac{f_\alpha}{2d}\right)^{2d}\,\left(\frac{\alpha}{\z_\alpha}\right)^\alpha
\right]^{1/(2d-\vert\alpha\vert)}\\
&\\
\mbox{s.t.}&\frac{u_i}{u_{i-1}}+\frac{f_{2d,i-1}-f_{2d,i}}{u_{i-1}}\,=\,1,\quad i=2,\ldots,n\\
&\\
&\frac{f_{2d,1}}{u_1}\leq 1\\
&\\
&\dis\sum_{\alpha\in\Delta(f)}\frac{z_{\alpha,i}}{u_i}\leq 1,\qquad i=1,\ldots,n\\
&\\
&\left(\frac{2d}{f_\alpha}\right)^{2d}\,\left(\frac{\z_\alpha}{\alpha}\right)^\alpha \,=\,1;\quad\alpha\in\Delta(f),\:\vert\alpha\vert=2d.
\end{array}\end{equation}

Notice that (\ref{aux1}) is not a geometric program because of the presence of
$n-1$ posynomial {\it equality} constraints.
To obtain a geometric program, observe that in (\ref{aux1}) we can relax the $n-1$ posynomial equality constraints
\begin{equation}
\label{aux2}
\frac{u_i}{u_{i-1}}+\frac{f_{2d,i-1}-f_{2d,i}}{u_{i-1}}\,=\,1,\quad i=2,\ldots,n,\end{equation}
to the posynomial {\it inequality} constraints
\begin{equation}
\label{aux3}
\frac{u_i}{u_{i-1}}+\frac{f_{2d,i-1}-f_{2d,i}}{u_{i-1}}\,\leq\,1,\quad i=2,\ldots,n,\end{equation}
without changing the optimal value. Indeed, suppose that $\u$ is an optimal solution
of (\ref{aux1}) with (\ref{aux3}) in lieu of (\ref{aux2}). Then
increase $u_2$ to $u'_2:=u_2+\delta_2$ with $\delta_2>0$ so that
\[\frac{u_2+\delta_2}{u_{1}}+\frac{f_{2d,1}-f_{2d,2}}{u_{1}}\,=\,1.\]
Since $0<u_2\leq u'_2$, the constraint $\dis\sum_{\alpha\in\Delta}\frac{z_{\alpha,2}}{u'_2}\leq 1$
and the constraint $\frac{u_3}{u'_{2}}+\frac{f_{2d,2}-f_{2d,3}}{u'_{2}}\,\leq\,1$,
are satisfied. Therefore, one may repeat the process now with $u_3$, i.e., increase $u_3$ to $u'_3=u_3+\delta_3$ with $\delta_3$ so that
\[\frac{u_3+\delta_3}{u'_{2}}+\frac{f_{2d,2}-f_{2d,3}}{u'_{2}}\,=\,1.\]
Since $0<u_3\leq u'_3$, the constraint $\dis\sum_{\alpha\in\Delta}\frac{z_{\alpha,3}}{u'_3}\leq 1$
and the constraint $\frac{u_4}{u'_{3}}+\frac{f_{2d,3}-f_{2d,4}}{u'_{3}}\,\leq\,1$, are satisfied, etc. Iterate the process to finally obtain a feasible solution
$((z_{\alpha,i}),\u')$ for (\ref{aux1}), with  the desired property. In addition, since $u_1$ and $(z_{\alpha,i})$ have not been changed,
the cost associated to the new feasible solution $((z_{\alpha,i}),\u')$  is the same.
\qed

\end{document}